\newtheorem{theorem}{Theorem}[section]
\newtheorem{lemma}[theorem]{Lemma}
\theoremstyle{definition}
\theoremstyle{remark}
\newtheorem{remark}[theorem]{Remark}
\numberwithin{equation}{section}
\begin{document}

\title[Hyperbolic lattices with Zariski-dense surface subgroups]{A hyperbolic lattice in each dimension with Zariski-dense surface subgroups}

%    Information for first author
\author{Sami Douba}
%    Address of record for the research reported here
\address{McGill University, Department of Mathematics and Statistics}
\email{sami.douba@mail.mcgill.ca}
%    \thanks will become a 1st page footnote.
\thanks{The author was supported by a public grant as part of the Investissement d'avenir project, FMJH, by LabEx CARMIN, ANR-10-LABX-59-01, and by the National Science Centre, Poland UMO-2018/30/M/ST1/00668.}

%    General info

\begin{abstract}
For each integer $n \geq 3$, we exhibit a nonuniform arithmetic lattice in $\mathrm{SO}(n,1)$ containing Zariski-dense surface subgroups.
\end{abstract}

\maketitle

It follows from a straightforward ping pong argument that any lattice in $\mathrm{SO}(n,1)$, $n \geq 2$, contains a Zariski-dense copy of a noncocompact discrete subgroup of $\mathrm{SO}(2,1)$, namely, a Zariski-dense free subgroup (this may also be seen as a consequence of the Borel density theorem \cite[Corollary~4.3]{MR123639} and \cite[Theorem~3]{MR286898}). It is thus natural to look for Zariski-dense subgroups of lattices in $\mathrm{SO}(n,1)$ that are isomorphic to cocompact discrete subgroups of, that is, uniform lattices in, $\mathrm{SO}(2,1)$; the latter are virtually fundamental groups of closed hyperbolic surfaces. We refer to fundamental groups of such surfaces as {\it surface groups}. 

While nonuniform lattices in $\mathrm{SO}(2,1)$ are virtually free and hence do not possess surface subgroups, it is expected that any lattice in $\mathrm{SO}(n,1)$ for $n \geq 3$ contains surface subgroups, and even Zariski-dense such subgroups. This has been established for $n=3$ by Cooper, Long, and Reid \cite{MR1431827} in the nonuniform case (see also \cite{MR3921320, MR4255043}) and by Kahn and Markovic \cite{MR2912704} in the uniform case; and for odd $n \geq 3$ by Hamenst\"adt \cite{MR3361773} (see also \cite{kahn2018surface}) in the uniform case. However, while some standard constructions of closed arithmetic hyperbolic manifolds of arbitrary dimension contain immersed totally geodesic surfaces (see, for instance, \cite[Example~8]{benoist2004five}), the author was not aware of an example in the literature of a lattice in $\mathrm{SO}(n,1)$ for each $n \geq 3$ containing {\it Zariski-dense} surface subgroups. The purpose of this note is to present such an example in each dimension.

\begin{theorem}\label{main}
For each $n \geq 3$, there is a nonuniform arithmetic lattice in $\mathrm{SO}(n,1)$ containing Zariski-dense surface subgroups.
\end{theorem}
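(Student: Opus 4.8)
The plan is to take the ambient lattice to be $\mathrm{SO}(q,\mathbf{Z})$ for a carefully chosen isotropic rational quadratic form $q$ of signature $(n,1)$, and to manufacture the Zariski-dense surface subgroup by \emph{bending} an arithmetic Fuchsian subgroup along a multicurve, arranging that the bending elements are simultaneously \emph{integral} and of \emph{finite order}, so that the bent group remains inside $\mathrm{SO}(q,\mathbf{Z})$.

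Concretely, first set $q_1 := x^2 + y^2 - 3z^2$, an anisotropic ternary rational form of signature $(2,1)$ (the coefficient $3$ serves only to force anisotropy over $\mathbf{Q}$), so that $\Gamma_1 := \mathrm{SO}(q_1,\mathbf{Z})$ is cocompact in $\mathrm{SO}(q_1)(\mathbf{R}) \cong \mathrm{SO}(2,1)$ and thus, after passing to a torsion-free finite-index subgroup, is a surface group. Then put $q := q_1 \perp \langle 1\rangle^{n-2}$, of signature $(n,1)$; this form is isotropic over $\mathbf{Q}$ — automatically for $n \geq 4$ by Meyer's theorem (an indefinite rational form in at least five variables), and by inspection for $n = 3$, where $q(1,1,1,1) = 0$ — so $\Gamma := \mathrm{SO}(q,\mathbf{Z})$ is a nonuniform arithmetic lattice in $\mathrm{SO}(q)(\mathbf{R}) \cong \mathrm{SO}(n,1)$, containing $\Gamma_1$ via the block embedding that is the identity on the $\langle 1\rangle^{n-2}$ summand.

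The crux, and the step I expect to be hardest, is geometric. Let $h \in \Gamma_1$ be the hyperbolic element arising from the fundamental unit of $\mathbf{Z}[\sqrt 3]$; it acts hyperbolically on the binary sublattice carrying $y^2 - 3z^2$ and fixes the norm-one vector $e_1$. Using subgroup separability of surface groups, I would pass to a torsion-free finite-index $\Sigma \leq \Gamma_1$ — a finite cover $X$ of the quotient orbifold — in which the closed geodesic carried by $h$ has $n - 2$ disjoint, simple, embedded elevations $\tilde c_1,\dots,\tilde c_{n-2}$. If $g_j \in \Sigma$ is the primitive element with axis $\tilde c_j$, then $g_j = \gamma_j h^{k_j}\gamma_j^{-1}$ for some $\gamma_j \in \Gamma_1$, so $g_j$ fixes the primitive norm-one vector $v_j := \gamma_j e_1 \in \mathbf{Z}^3$ and its $1$-eigenspace is $L_j := \ker(g_j - I) = \mathbf{Q}v_j \oplus \mathbf{Q}^{n-2}$. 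Insisting that the eigenvector $v_j$ have norm one is exactly what makes $L_j \cap \mathbf{Z}^{n+1}$ a \emph{unimodular} Euclidean lattice, with orthonormal basis $v_j, f_1, \dots, f_{n-2}$ (the $f_i$ spanning the $\langle 1\rangle^{n-2}$ block); consequently the quarter-turn $c_j$ in the plane $\langle v_j, f_j\rangle$, extended by the identity on the orthogonal complement, lies in $\mathrm{SO}(q,\mathbf{Z})$, and it commutes with $g_j$ since it is the identity on the $g_j$-invariant hyperbolic plane $L_j^\perp$.

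Finally, I would bend the inclusion $\Sigma \hookrightarrow \Gamma_1 \hookrightarrow \Gamma$ along the multicurve $\tilde c_1 \cup \dots \cup \tilde c_{n-2}$, twisting across $\tilde c_j$ by $c_j$. The resulting homomorphism $\rho : \Sigma \to \mathrm{SO}(n,1)$ satisfies $\rho(\Sigma) \subseteq \Gamma = \mathrm{SO}(q,\mathbf{Z})$ and is therefore automatically discrete; because the $\tilde c_j$ are disjoint and every bending angle equals $\pi/2 \in (0,\pi)$, a combination-theorem argument shows $\rho$ is injective with convex-cocompact image, so $\rho(\Sigma) \cong \Sigma$ is a surface subgroup of the nonuniform arithmetic lattice $\Gamma$. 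For Zariski density, observe that the Zariski closure of $\rho(\Sigma)$ contains the closure of the image of an un-bent subsurface group; that image is a non-elementary Fuchsian group, so its closure is $\mathrm{SO}(q_1) = \mathrm{SO}(\langle e_1,e_2,e_3\rangle)$. The closure also contains the conjugates of $\mathrm{SO}(q_1)$ by the words in the $c_j$ occurring in the bending, and crossing the edge $\tilde c_j$ adjoins the direction $f_j$ not previously present, so these subspaces jointly span $\mathbf{Q}^{n+1}$; since $\mathrm{SO}(m,1)$ is a maximal connected subgroup of $\mathrm{SO}(m+1,1)$, an induction forces the Zariski closure to be all of $\mathrm{SO}(q) \cong \mathrm{SO}(n,1)$. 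The principal obstacle, as indicated, is the third step: producing a single finite cover in which a geodesic carried by a hyperbolic element with a norm-one eigenvector (needed for the integrality of the bending elements) has a prescribed number of disjoint simple embedded elevations — that is, reconciling the arithmetic and topological constraints; the injectivity and Zariski-density verifications are comparatively routine once this is arranged.
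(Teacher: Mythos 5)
Your arithmetic setup (the anisotropic ternary form $x^2+y^2-3z^2$, the isotropic extension $q$, the norm-one fixed vector $v_j=\gamma_j e_1$, the unimodularity of $L_j\cap\mathbb{Z}^{n+1}$, and hence the integrality of the quarter-turns $c_j$) is sound, and producing $n-2$ disjoint simple elevations is in fact the easy part: Scott's theorem makes the geodesic of $h$ simple in a finite cover, and its preimage in any further finite cover is a disjoint union of simple elevations. The genuine gap is the step you call comparatively routine: injectivity of the bent representation. The assertion that a combination theorem applies ``because every bending angle equals $\pi/2\in(0,\pi)$'' is not a valid hypothesis for any combination or ping-pong argument I know. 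The classical bending results (Johnson--Millson, Kourouniotis, Apanasov) require the bending angle to be small relative to the normal injectivity radius (collar width) of the bending multicurve: one needs the half-spaces or ridge neighborhoods used for ping-pong, based along the various lifts of the bending geodesics, to remain disjoint, and this fails when distinct lifts are close together and the angle has definite size. In your construction the angle is forced to be exactly $\pi/2$ by integrality, while the bending curves are elevations of high powers of the fixed element $h$; they are long, and the covers furnished by Scott's theorem carry no lower bound on their collar widths (the collar lemma's guaranteed width even tends to $0$ as the curves lengthen). Integrality does give discreteness of the image for free, but discreteness says nothing about the kernel of $\rho$: a non-injective $\rho$ yields only a proper quotient of $\Sigma$, not a surface subgroup, and for a fixed hyperbolic structure one cannot expect to bend along a given multicurve by arbitrary angles in $(0,\pi)$ and stay faithful -- in the realization theorems of Bonahon--Otal the hyperbolic structure on the pleated surface is determined by, not chosen independently of, the bending data.

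The gap looks fixable, but it requires an additional quantitative step of the same flavor as the one you already invoke: besides making the elevations simple and disjoint, you must arrange (via deeper covers, again using separability or residual finiteness) that each elevation has an embedded collar wider than a universal constant sufficient for $\pi/2$-bending, so that a genuine ping-pong argument applies; the convex-cocompactness claim needs this as well. That omitted estimate, not the existence of the $n-2$ disjoint simple elevations, is where the real work of your approach lies. For what it is worth, the paper takes an entirely different route that avoids bending and all collar estimates: it exhibits an explicit integral form $Q_{n+1}$ of signature $(n,1)$, realizes the associated right-angled Coxeter group inside $\mathrm{O}(Q_{n+1};\mathbb{Z})$ via the Tits representation (Zariski-dense by Benoist--de la Harpe), and combines this reflection group with its conjugate by a suitable cusp translation to obtain a right-angled $2n$-gon reflection group, which is virtually a surface group.
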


Before we proceed, we fix some notation. Given an integer $n \geq 1$, a subdomain~$D \subset \mathbb{R}$, and a symmetric matrix $Q \in \mathrm{M}_{n+1}(D)$ of signature $(n,1)$, we denote by $\mathrm{O}(Q; D)$ (resp., $\mathrm{SO}(Q; D)$) the set of all matrices~$g \in \mathrm{GL}_{n+1}(D)$ (resp., $g \in \mathrm{SL}_{n+1}(D)$) satisfying $g^T Q g = Q$. The hypersurface of $\mathbb{R}^{n+1}$ consisting of all~$x \in \mathbb{R}^{n+1}$ satisfying $x^T Q x = -1$ is a two-sheeted hyperboloid; we denote by $\mathrm{O}'(Q; D)$ the subgroup of $\mathrm{O}(Q; D)$ preserving each sheet. When $Q$ is the standard form $\mathrm{diag}(1, \ldots, 1, -1)$, we write $\mathrm{O}(n,1)$ (resp., $\mathrm{SO}(n,1)$, $\mathrm{O}'(n,1)$) in the place of $\mathrm{O}(Q; \mathbb{R})$ (resp., $\mathrm{SO}(Q; \mathbb{R})$, $\mathrm{O}'(Q; \mathbb{R})$). We view $\mathrm{O}'(n,1)$ as the isometry group of~$n$-dimensional real hyperbolic space $\mathbb{H}^n$ via the hyperboloid model of the latter.

We now proceed to the examples. For $m \geq 3$, let $K_m \in \mathrm{M}_m(\mathbb{Z})$ be the matrix all of whose entries are equal to $1$; let $B_m \in \mathrm{M}_m(\mathbb{Z})$ be the matrix with $2$'s on the diagonal, $1$'s on the superdiagonal and subdiagonal, and $0$'s everywhere else; and let $Q_m = B_m - K_m$. What follows is the key observation of this note.

\begin{lemma}\label{keyobs}
The symmetric matrix $Q_{n+1}$ has signature $(n,1)$ for $n \geq 3$.
\end{lemma}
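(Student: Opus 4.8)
The plan is to exhibit, for $m = n+1 \ge 4$, a hyperplane of $\mathbb{R}^m$ on which the quadratic form $x \mapsto x^T Q_m x$ is positive definite, together with a single vector on which the form is negative. By the Courant--Fischer min--max principle these two facts pin down the inertia of $Q_m$: the first forces at least $m-1$ positive eigenvalues, the second at least one negative eigenvalue, and since $m-1$ and $1$ already account for all $m$ eigenvalues the signature must be exactly $(m-1,1) = (n,1)$.

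To carry this out I would first record that $B_m$ is positive definite: collecting the contributions of the diagonal, super-, and sub-diagonal entries one obtains the identity
\[
x^T B_m x = x_1^2 + x_m^2 + \sum_{i=1}^{m-1}(x_i + x_{i+1})^2,
\]
whose right-hand side is a sum of squares that vanishes only when $x = 0$ (it forces $x_1 = x_m = 0$ and $x_i = -x_{i+1}$ for all $i$). Since every entry of $K_m$ equals $1$, we may write $K_m = \mathbf{1}\mathbf{1}^T$ with $\mathbf{1} = (1,\dots,1)^T$, so that $x^T Q_m x = x^T B_m x - (\mathbf{1}^T x)^2$. On the hyperplane $V = \{x : \mathbf{1}^T x = 0\}$ the subtracted term disappears, leaving $x^T Q_m x = x^T B_m x$, which is positive for every nonzero $x \in V$; thus $Q_m$ is positive definite on a subspace of dimension $m-1$.

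For the negative direction I would simply test the vector $\mathbf{1}$: the entries of $B_m$ sum to $2m + 2(m-1) = 4m-2$ and those of $K_m$ sum to $m^2$, so $\mathbf{1}^T Q_m \mathbf{1} = 4m - 2 - m^2 = -\bigl((m-2)^2 - 2\bigr)$, which is negative exactly when $m \ge 4$ — this is precisely where the hypothesis $n \ge 3$ enters. (For $m = 3$ one instead finds $\mathbf{1}^T Q_3 \mathbf{1} = 1$, and in fact $Q_3$ is degenerate, so the statement genuinely requires $n \ge 3$.) Combining the two displays via the min--max principle finishes the proof, applied with $m = n+1$. The only step that warrants a moment's care is the implication ``positive definite on a codimension-one subspace $\Rightarrow$ at most one nonpositive eigenvalue,'' which follows from a dimension count: a subspace on which $Q_m$ is negative semidefinite must meet $V$ trivially and hence be at most a line. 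An alternative route — computing $\det Q_m = \det(B_m)\bigl(1 - \mathbf{1}^T B_m^{-1}\mathbf{1}\bigr)$ by solving the tridiagonal recurrence $B_m x = \mathbf{1}$ — also works but is more computational and still requires separating the parities of $m$, so I would favour the hyperplane argument.
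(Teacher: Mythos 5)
Your proof is correct and follows essentially the same route as the paper: both decompose $Q_{n+1} = B_{n+1} - \mathbf{1}\mathbf{1}^T$, note that the rank-one term vanishes on the hyperplane orthogonal to the all-ones vector, where $Q_{n+1}$ therefore agrees with the positive-definite $B_{n+1}$, and test the all-ones vector itself for the negative direction (your $4m-2-m^2$ is exactly the paper's $-n^2+2n+1$). The only difference is the sub-step verifying that $B_{n+1}$ is positive definite --- your sum-of-squares identity versus the paper's realization of $B_{n+1}$ as the Gram matrix of the vectors $(-1)^{i+1}(e_i - e_{i+1})$ --- which is immaterial.
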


\begin{proof}
The author is grateful to Yves Benoist for the following efficient argument. For $m > 0$, let $v_m := (1, \ldots, 1)^T \in \mathbb{R}^m$ and let $H_{m-1}$ be the orthogonal complement of $\langle v_m \rangle$ in $\mathbb{R}^m$ with respect to the standard inner product on $\mathbb{R}^m$. Now let ${n \geq 3}$. Then $v_{n+1}^T Q_{n+1} v_{n+1} = -n^2+2n+1 < 0$, so it suffices to show that the restriction of~$Q_{n+1}$ to~$H_n$ is positive-definite. Indeed, since the restriction of the form $K_{n+1}$ to~$H_n$ is trivial, we have that the forms $Q_{n+1}$ and $B_{n+1}$ have the same restriction to~$H_n$, so that it is enough to show that $B_{n+1}$ is positive-definite. This is true since we may view $B_{n+1}$ as the matrix representation  of the standard inner product on~$\mathbb{R}^{n+2}$ restricted to~$H_{n+1}$ with respect to the basis $((-1)^{i+1}(e_i -e_{i+1}))_{i=1}^{n+1}$ of~$H_{n+1}$, where $(e_1, \ldots, e_{n+2})$ is the standard basis for $\mathbb{R}^{n+2}$. 
\end{proof}

\begin{proof}[Proof of Theorem \ref{main}]
Let $n \geq 3$. By the Borel–Harish-Chandra theorem \cite{MR147566} (and by Lemma \ref{keyobs}), we have that $\Lambda_n := \mathrm{O}'(Q_{n+1}; \mathbb{Z})$ is a lattice in  $\mathrm{O}(Q_{n+1}; \mathbb{R})$. As will become clear in the course of the proof, the lattice $\Lambda_n$ is nonuniform. We show that $\Lambda_n$ contains a Zariski-dense subgroup isomorphic to a cocompact lattice in $\mathrm{O}'(2,1)$ generated by the reflections in the sides of a hyperbolic right-angled~$2n$-gon. This will complete the proof since $Q_{n+1}$ has signature $(n,1)$ by Lemma \ref{keyobs}.

To that end, let $(W_n, (s_1, \ldots, s_{n+1}))$ be the right-angled Coxeter system associated to the matrix $Q_{n+1}$; that is, let $W_n$ be the right-angled Coxeter group given by the presentation with generators $s_1, \ldots, s_{n+1}$ subject to the relations $s_i^2 = 1$ for~$i = 1, \ldots, {n+1}$ and $[s_i, s_j] = 1$ whenever the $(i,j)^\text{th}$ entry of $Q_{n+1}$ is $0$ (in our case, the latter happens exactly when $|i-j|=1$). The image $\Gamma_n$ of the Tits representation $\rho_n: W_n \rightarrow \mathrm{SL}^\pm_{n+1}(\mathbb{R})$ associated to the Coxeter system $(W_n, (s_1, \ldots, s_{n+1}))$ lies in~$\Lambda_n$ and is Zariski-dense in $\mathrm{O}(Q_{n+1}; \mathbb{R})$ \cite{benoist2004adherence}. Interpreting $\mathrm{O}'(Q_{n+1}; \mathbb{R})$ as the group of conformal diffeomorphisms of $\mathbb{S}^{n-1} = \partial \mathbb{H}^n$ by conjugating $\mathrm{O}(Q_{n+1}; \mathbb{R})$ to $\mathrm{O}(n,1)$ within $\mathrm{GL}_{n+1}(\mathbb{R})$, we have that $\gamma_i := \rho_n(s_i)$ is an inversion in a (round) hypersphere~$S_i$ of $\mathbb{S}^{n-1}$ for $i= 1, \ldots, n+1$. Moreover, we have that $S_i$ is orthogonal to $S_{i+1}$ for~$i=1, \ldots, n$, and that $S_i$ and $S_j$ are tangent for $1 \leq i < j-1 \leq n$. The latter follows from the fact that $\gamma_i \gamma_j$ is nontrivial and unipotent, hence parabolic, for such~$i$ and $j$.

We now visualize $\mathbb{S}^{n-1}$ via stereographic projection onto $\mathbb{R}^{n-1}$ from the tangency point $\infty$ of $S_1$ and $S_{n+1}$. Under this projection, the hyperspheres $S_1$ and $S_{n+1}$ are parallel hyperplanes of $\mathbb{R}^{n-1}$, while the remaining hyperspheres are contained in some ball $B \subset \mathbb{R}^{n-1}$. Since the stabilizer $\mathrm{Stab}_{\Lambda_n}(\infty)$ of $\infty$ in $\Lambda_n$ contains the reflections $\gamma_1$ and $\gamma_{n+1}$ in the parallel Euclidean hyperplanes $S_1$ and $S_{n+1}$, respectively, and since $\Lambda_n$ is a lattice in $\mathrm{O}'(Q_{n+1}; \mathbb{R})$, we must have that $\mathrm{Stab}_{\Lambda_n}(\infty)$ is a lattice in $\mathrm{Isom}(\mathbb{R}^{n-1})$ by the Margulis lemma (see, for instance, \cite[Prop.~D.2.6]{MR1219310}). In particular, there is some translation $\sigma \in \mathrm{Stab}_{\Lambda_n}(\infty)$ with nontrivial $S_1$-component, and so $\tau := (\gamma_1 \sigma \gamma_1) \sigma$ is a nontrivial translation parallel to $S_1$ (and $S_{n+1}$). We now replace $\tau$ with a sufficiently high power so that $B \cap \tau(B) = \emptyset$. Appropriately defined, the common exterior of the hyperspheres $S_1, \ldots, S_{n+1}$ and their images under $\tau$ produce a Coxeter polytope in $\mathbb{H}^n$ with the correct dihedral angles so that $\langle \Gamma_n, \tau \Gamma_n \tau^{-1} \rangle < \Lambda_n$ is isomorphic to the right-angled $2n$-gon group (see, for example, the introduction of \cite{MR783604}).
\end{proof}

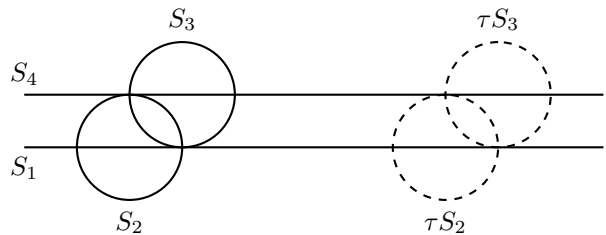
\begin{figure}
   \begin{tikzpicture}[scale=0.7, rotate=90]
    % Axis
    \draw[thick,black] (0,-7)--(0,4) node[below] {$S_1$};
    \draw[thick,black] (1,-7)--(1,4) node[above] {$S_4$};
    \draw[black,thick] (0,2) circle (1cm) node[yshift=-1cm] {$S_2$};
    \draw[black,thick] (1,1) circle (1cm) node[yshift=1cm] {$S_3$};
    \draw[black,thick, dashed] (0,-4) circle (1cm) node[yshift=-1cm] {$\tau S_2$};
    \draw[black,thick, dashed] (1,-5) circle (1cm) node[yshift=1cm] {$\tau S_3$};
  \end{tikzpicture}
  \caption{Visualizing the case $n=3$. The sphere $\mathbb{S}^2$ is stereographically projected onto the plane from the tangency point of the circles $S_1$ and $S_4$, so that $S_1$ and $S_4$ project to parallel lines. Up to a Euclidean similarity, the circles $S_i$ are as in the figure. Zariski-density in $\mathrm{O}(Q_4; \mathbb{R})$ of the subgroup $\Gamma_3$ can be deduced from the fact that no vertical line is orthogonal to both $S_2$ and $S_3$. Any lattice in $\mathrm{O}'(3,1)$ containing the inversions in the $S_i$ also contains inversions in two circles resembling the dashed circles above; the latter represent the images of~$S_2$ and $S_3$ under a horizontal translation $\tau$ of large magnitude. The subgroup of $\mathrm{O}'(3,1)$ generated by the inversions in the above six circles is (abstractly) a right-angled hexagon group. Extending each circle $S_i$ to a sphere~$S_i'$ in~$\mathbb{S}^3$ orthogonal to the page, and denoting by $S_5'$ the Euclidean plane parallel to the page and resting on top of $S_2'$ and~$S_3'$, we have that the subgroup of~$\mathrm{O}'(4,1)$ generated by the inversions in~$S_1', \ldots, S_5'$ is (conjugate to) the right-angled pentagon group $\Gamma_4'$ in Remark~\ref{evendim}. }
  \label{fig:circles}
\end{figure}

\begin{remark}
A ping pong argument following \cite[Section~VII.E]{MR959135} demonstrates that in fact $\langle \Gamma_n, \tau \rangle < \Lambda_n$ decomposes as the HNN extension $\Gamma_n *_{\langle \gamma_1, \gamma_{n+1}\rangle}$ given by the identity map on ${\langle \gamma_1, \gamma_{n+1}\rangle}$. 
\end{remark}

\begin{remark}
The surface subgroups produced above are geometrically finite but are not convex cocompact since they contain parabolics. They are also automatically {\it thin} in the sense of Sarnak \cite{MR3220897} since a surface group cannot be realized as a lattice in $\mathrm{O}'(n,1)$ for $n \geq 3$ (for instance, because the outer automorphism group of such a lattice is finite by Mostow–Prasad rigidity \cite{mostow1968quasi, prasad1973strong}). 
\end{remark}

\begin{remark}\label{zariski}
In this remark, we use the language of Coxeter schemes following \cite[Section~II.5]{MR783604}. To justify Zariski-density of $\Gamma_n$, and hence $\langle \Gamma_n, \tau \Gamma_n \tau^{-1} \rangle$, in~$\mathrm{O}(Q_{n+1}; \mathbb{R})$, we appealed to the general result of Benoist and de la Harpe \cite{benoist2004adherence}, which asserts in particular that if the Gram matrix~$Q$ of a finite connected Coxeter scheme~$\Sigma$ with no dotted edges is nondegenerate, then the Tits representation of the associated Coxeter group is Zariski-dense in $\mathrm{O}(Q; \mathbb{R})$. When~$Q$ has a single negative eigenvalue (as is the case for $Q = Q_{n+1}$,  $n \geq 3$, by Lemma~\ref{keyobs}), so that $Q$ is the Gram matrix of a hyperbolic Coxeter polytope \cite[Theorem~2.1]{MR783604}, this also follows from the fact that, for $n \geq 2$, if $P \subset \mathbb{H}^n$ is an irreducible Coxeter polytope with finitely many bounding hyperplanes $\Pi_i$, then the subgroup of~$\mathrm{O}'(n,1)$ generated by the reflections in the $\Pi_i$ is Zariski-dense in $\mathrm{O}(n,1)$ if and only if the~$\Pi_i$ do not all share a point in $\mathbb{H}^n \cup \partial \mathbb{H}^n$ or a common orthogonal hyperplane in $\mathbb{H}^n$ (see, for instance, \cite[Theorem~1.3]{MR1836778}), which holds if and only if the Gram matrix of $P$ has rank $n+1$ \cite[Section~I.1]{MR783604}. 

We remark that if all the edges of the Coxeter scheme $\Sigma$ are bold (in other words, if the entries of $Q$ are contained in $\{-1,0,1\}$, as is true for $Q = Q_{n+1}$), then the argument of Benoist and de la Harpe simplifies. For an outline of their argument in this case, see the proof of Lemma 2 in \cite{douba2022thin}.
\end{remark}

\begin{remark}\label{moreefficient}
Let $n \geq 4$. We have demonstrated that $\Lambda_n$ contains a Zariski-dense copy of the right-angled $2n$-gon group, but it is even true that $\Lambda_n$ contains a Zariski-dense copy of the right-angled $2(n-1)$-gon group. Indeed, by Lemma \ref{keyobs} and Remark~\ref{zariski}, there is a unique hypersphere $S \subset \mathbb{S}^{n-1}$ that is simultaneously orthogonal to $S_1, \ldots, S_n$. We visualize $\mathbb{S}^{n-1}$ via stereographic projection onto $\mathbb{R}^{n-1}$ from the tangency point of $S_1$ and $S_n$. Under this projection, the hyperspheres~$S$,~$S_1$, and~$S_n$ are hyperplanes of $\mathbb{R}^{n-1}$, while $S_2, \ldots, S_{n-1}$  are Euclidean $(n-2)$-spheres. As in the proof of Theorem \ref{main}, there is some Euclidean translation in $\Lambda_n$ that is not parallel to $S$, and hence some Euclidean translation in $\Lambda_n$ that is parallel to~$S_1$ but not parallel to $S$. For a sufficiently high power $\tau'$ of the latter translation, we have that~$\langle \gamma_1, \gamma_2, \ldots, \gamma_n, \tau' \gamma_{n-1} \tau'^{-1}, \tau' \gamma_{n-2} \tau'^{-1}, \ldots, \tau' \gamma_2\tau'^{-1} \rangle < \Lambda_n$ is a right-angled~$2(n-1)$-gon group. Moreover, by Remark \ref{zariski}, this subgroup of $\Lambda_n$ is Zariski-dense in~$\mathrm{O}(n,1)$ since there is no hypersphere in $\mathbb{S}^{n-1}$ that is simultaneously orthogonal to $S_1, S_2, \ldots, S_n, \tau' S_{n-1}, \tau' S_{n-2}, \ldots, \tau'S_2$. 
\end{remark}

\begin{remark}\label{evendim} There are more efficient examples in even dimensions. Indeed, let $n \geq 4$, and $Q_{n+1}' \in \mathrm{M}_{n+1}(\mathbb{Z})$ be the matrix obtained from $Q_{n+1}$ by replacing the top-right and bottom-left entries with $0$'s.  Let $(W_n', (t_1^{(n)}, \ldots, t_{n+1}^{(n)}))$ be the right-angled Coxeter system associated to $Q_{n+1}'$, so that~$W_n'$ is a right-angled $(n+1)$-gon group. The associated Tits representation $\rho_n' : W_n' \rightarrow \mathrm{SL}^\pm_{n+1}(\mathbb{R})$ realizes~$W_n'$ as a subgroup~$\Gamma_n'$ of~$\mathrm{O}(Q_{n+1}'; \mathbb{Z})$ in $\mathrm{O}(Q_{n+1}'; \mathbb{R})$. If $n$ is even, then $Q_{n+1}'$ has signature $(n,1)$  \cite[Example~4]{douba2022thin}, and so again by \cite{benoist2004adherence} (or Remark \ref{zariski}), we have that~$\Gamma_n'$ is Zariski-dense in $\mathrm{O}(Q_{n+1}'; \mathbb{R})$. In this manner (alternatively, via Remark~\ref{moreefficient}), one for instance obtains a nonuniform arithmetic lattice in $\mathrm{SO}(4,1)$ containing a Zariski-dense copy of the fundamental group of a closed orientable genus-$2$ surface. 

Now suppose instead that $n$ is odd. Then $Q_{n+1}'$ has signature $(n-1,1,1)$, with kernel spanned by the vector $u_{n+1} := ((-1)^i)_{i=0}^n \in \mathbb{R}^{n+1}$. Note that $\Gamma_n'$ is contained in the stabilizer $G_n$ of $u_{n+1}$ in $\mathrm{O}(Q_{n+1}'; \mathbb{R})$. Denoting by $V_n$ the quotient of $\mathbb{R}^{n+1}$ by the span of $u_{n+1}$, by $\overline{Q_{n+1}'}$ the form induced on~$V_n$ by $Q_{n+1}'$, and by~$\mathrm{O}(\overline{Q_{n+1}'})$ the group of linear automorphisms of $V_n$ preserving the form $\overline{Q_{n+1}'}$, we have a natural map $G_n \rightarrow \mathrm{O}(\overline{Q_{n+1}'})$. Since $Q_n$ is the matrix representation of the form~$\overline{Q_{n+1}'}$ with respect to the basis $(\overline{e_1}, \ldots, \overline{e_n})$ for $V_n$, where $\overline{e_i}$ is the image in $V_n$ of the $i^\text{th}$ standard basis vector for $\mathbb{R}^{n+1}$, we may identify $\mathrm{O}(\overline{Q_{n+1}'})$ with~$\mathrm{O}(Q_n; \mathbb{R})$ to obtain a map ${\pi_n : G_n \rightarrow \mathrm{O}(Q_n; \mathbb{R})}$; explicitly, this map sends a matrix ${A = (a_{i,j})_{i,j} \in G_n}$ to the matrix obtained from $A$ by first adding $a_{n+1, j}u_{n+1}$ to the $j^\text{th}$ column for~$1 \leq j \leq n$ and then deleting the final row and column. In particular, we have that~${\pi_n(\Gamma_n') \subset \mathrm{O}(Q_n; \mathbb{Z})}$ and that $\pi_n(\rho_n'(t_i^{(n)})) = \rho_{n-1}(s_i)$ for~$i = 1, \ldots, n$. Moreover, the map $\pi_n$ is injective on $\Gamma_n'$; see \cite{de2012semisimple} and the references therein.  The conclusion is that  $\Gamma_m$ is in fact contained in a right-angled~$(m+2)$-gon subgroup of $\mathrm{O}(Q_{m+1}; \mathbb{Z})$, namely, $\pi_{m+1}(\Gamma_{m+1}')$, for $m \geq 4$ even. 
\end{remark}

\subsection*{Acknowledgements} I thank Yves Benoist for the proof of Lemma \ref{keyobs}. I am grateful to Pierre Pansu for his hospitality during my stay at Universit\'e Paris-Saclay, where this note was written, and to Konstantinos Tsouvalas for helpful discussions. 
\bibliography{ThinRACGsbib}{}
\bibliographystyle{alpha}

\end{document}